\newtheorem{lemma}{Lemma}[section]
\newtheorem{theorem}[lemma]{Theorem}
\newtheorem{proposition}[lemma]{Proposition}
\newtheorem{remark}[lemma]{Remark}
\theoremstyle{definition}
\newtheorem{example}[lemma]{Example}
\newtheorem{algorithm}[lemma]{Algorithm}
\DeclareMathOperator{\Fr}{Fr}
\DeclareMathOperator{\Nm}{Nm}
\DeclareMathOperator{\Nz}{DegNz}
\title{Non-vanishing forms in projective space over finite fields}
\author{Samuel Lundqvist}
\date{}                                          % Activate to display a given date or no date
\begin{document}
\maketitle

\begin{abstract}
We consider a subset of projective space over a finite field and give bounds 
on the minimal degree of a non-vanishing form with respect to this subset. We also give an algorithm to compute a non-vanishing form. 
\end{abstract}

\section{Introduction}
Let $\mathbb{X} = \{p_1,\ldots, p_m\}$ be a set of points in $ \mathbb{P}^n( \Bbbk)$, where $\Bbbk$ is a field.
We say that a form $f$ in $\Bbbk[x_0, \ldots, x_n]$ is non-vanishing with respect to $\mathbb{X}$ if
$f(p_i) \neq 0$ for all $i$. When $ \Bbbk$ is an infinite field, there is an infinite number of linear forms which are non-vanishing on $\mathbb{X}$. This is not always the case when $\Bbbk$ is a finite field. Consider $\mathbb{X} = \{(1:0), (0:1), (1:1)\} \subseteq \mathbb{P}^1(\mathbb{F}_2)$, where $\mathbb{F}_2$ denotes the finite field with two elements. There are three linear forms in 
$\mathbb{F}_2[x_0,x_1]$: $x_0, x_1$ and $x_0+x_1$. We have 
$$x_0((0:1)) =  x_1((1:0)) = (x_0 + x_1) ((1:1)) = 0.$$
Thus, there is no linear non-vanishing form with respect to $\mathbb{X}$.

When $\mathbb{X} \subseteq \mathbb{P}^n(\Bbbk)$, let 
$\Nz(\mathbb{X}) \geq 1$ denote the least degree of a non-vanishing 
form $f$ with respect to $\mathbb{X}$. Denote by  
$\mathbb{F}_q$ the field with $q$ elements. 
In this paper, we will give bounds on $\Nz(\mathbb{X})$ when $\mathbb{X} \subseteq \mathbb{P}^n(\mathbb{F}_q)$ and an algorithm to compute a non-vanishing form. 

In the language of commutative algebra, a form $f$ in $\Bbbk[x_0, \ldots, x_n]$ is non-vanishing with respect to a set of 
projective points $\mathbb{X}$ if and only if $[f]$ is a non-zero divisor in the quotient ring 
$\Bbbk[x_0, \ldots, x_n]/I(\mathbb{X})$, where $I(\mathbb{X})$ 
is the vanishing ideal with respect to $\mathbb{X}$ and $[f]$ denotes the equivalence class of $f$ in 
$\Bbbk[x_0, \ldots, x_n]/I(\mathbb{X})$. Hence,  
$\Nz(\mathbb{X})$ is the least degree of  a non-zero divisor in 
$\Bbbk[x_0, \ldots, x_n]/I(\mathbb{X})$.
 
In Proposition 3.2 in \cite{Kreuzer}, Kreuzer shows that when 
$\mathbb{X} \subseteq \mathbb{P}^n(\mathbb{F}_q)$ and $|\mathbb{X}| \leq q$, then $\Nz(\mathbb{X})= 1$ 
by using that an element $[f] \in \mathbb{F}_q[x_0, \ldots, x_n]/I(\mathbb{X})$ is a non-zero divisor if $f$ does not belong to the union of the associated primes of $I(\mathbb{X})$. 
The same result using the same method is given independently in \cite{Lundqvist}. In that paper, linear non-zero divisors play an important role for computing varieties over one-dimensional graded rings.
In this paper we obtain the result as a special case of Theorem \ref{thm:bounds}.

%problem has been studied The problem of finding a non-vanishing form came from an algorithm to compute the variety of an ideal of projective dimension zero \cite{Lundqvist}. A crucial point of the algorithm is to find a minimal degree of a non-zero divisor in the quotient ring $\Bbbk[x_0, \ldots, x_n]/I(\mathbb{X})$. 

The existence of a non-zero divisor can also be stated in terms of the union of finite subspaces of 
$\Bbbk[x_0,\ldots,x_n]$ as follows. Let $\mathbb{X} = \{p_1, \ldots, p_m\}$ 
and let $I(p_i)$ be the set of all homogeneous polynomials vanishing on $p_i$. Let
$A = \cup_i I(p_i)$ and let $d$ be the least positive degree such that 
$\Bbbk[x_0, \ldots, x_n]_d$ is not contained in $A$. 
Then $d = \Nz(\mathbb{X})$.

Finally, if $\mathbb{X}$ is the set of $\mathbb{F}_q$ rational points 
of a hypersurface, given by a form $f$ in $\mathbb{F}_q[x_0,\ldots,x_n]$, then 
$\Nz(\mathbb{X})$ is the least degree of a form $g$ such that the variety determined by the ideal $(f,g)$, has no $\mathbb{F}_q$ rational points.

The author would like to thank Torsten Ekedahl for providing useful suggestions and comments on the paper.

\section{Preliminaries}

The results that we will present in this paper relies on Warning's theorem (Satz 3 in \cite{Warning}), which states that if the equation $f = 0$,  where $f$ in 
$\mathbb{F}_q[x_1, \ldots, x_n]$ is an element of degree $d<n$, has a solution, then it
has at least $q^{n-d}$ solutions. We give the projective version of the result as a lemma.

\begin{lemma} \label{lemma:first}
Let $f$ be a non-constant form in $\mathbb{F}_q[x_0, \ldots, x_n]$ of degree $d<n+1$. Then there are at least $1 + q + \cdots + q^{n-d}$ solutions to $f=0$ in $\mathbb{P}^n(\mathbb{F}_q)$.
\end{lemma}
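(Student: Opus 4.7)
The plan is to deduce the projective statement from the affine version of Warning's theorem cited just above by applying it in $n+1$ variables and then accounting for the scaling action of $\mathbb{F}_q^*$.

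First I would regard $f$ as a polynomial in the $n+1$ variables $x_0,\ldots,x_n$. Since $f$ is a form of positive degree, it vanishes at the origin, so $f=0$ has at least one affine solution in $\mathbb{F}_q^{n+1}$; in particular the hypothesis of the affine Warning theorem (with the ambient dimension taken to be $n+1$) is satisfied, using exactly the given degree bound $d<n+1$. This yields at least $q^{n+1-d}$ solutions of $f=0$ in $\mathbb{F}_q^{n+1}$.

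Next I would convert this affine count into a projective count. Because $f$ is homogeneous, its zero locus in $\mathbb{A}^{n+1}$ is stable under the scaling action of $\mathbb{F}_q^*$, so the nonzero affine solutions partition into orbits of size $q-1$, each mapping to a unique point of $\mathbb{P}^n(\mathbb{F}_q)$. Removing the origin from the affine zero set leaves at least
$$q^{n+1-d}-1 = (q-1)(1+q+\cdots+q^{n-d})$$
nonzero affine solutions, and dividing by $q-1$ produces at least $1+q+\cdots+q^{n-d}$ projective solutions, as required.

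There is no real obstacle here; the argument is essentially bookkeeping. The two points to keep in mind are that Warning must be invoked in $n+1$ rather than $n$ variables (which is exactly why the hypothesis reads $d<n+1$), and that the origin has to be singled out as a guaranteed affine zero before the $\mathbb{F}_q^*$-quotient is taken, so that the subtraction and division combine into the clean geometric series $1+q+\cdots+q^{n-d}$.
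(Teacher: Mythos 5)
Your proof is correct and follows essentially the same route as the paper: invoke Warning's theorem in $n+1$ variables (using the origin as the guaranteed solution of the homogeneous equation), discard the trivial solution, and divide the remaining at least $q^{n+1-d}-1$ nonzero zeros by the orbit size $q-1$ of the scaling action to get $1+q+\cdots+q^{n-d}$ projective solutions. You simply spell out the bookkeeping that the paper's two-line proof leaves implicit.
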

\begin{proof} By Warning's theorem, there are at least $q^{n-d+1}$ solutions in 
$\mathbb{F}_q^{n+1}$. 
Removing the trivial solution, we are left with at least $q^{n-d+1} - 1$ zeroes. Thus, 
the number of projective solutions is at least
$(q^{n-d+1}-1)/(q-1) = 1 + q + \cdots + q^{n-d}$. 
\end{proof}

The requirement on $d$ in Warning's theorem is sharp. Indeed, Lang (Theorem 1 in \cite{Lang}) gives a
 construction of a form of degree $n+1$ in $\mathbb{F}_q[x_0, \ldots, x_n]$ which is 
 non-vanishing with respect to $\mathbb{P}^n(\mathbb{F}_q)$ --- it is the norm of the 
 element 
$x_0e_0 + \cdots + x_ne_n$, where $\{e_0, \ldots, e_n\}$ is a basis for an extension 
$\mathbb{F}_{q^{n+1}}$  of $\mathbb{F}_q$ of degree $n+1$. 
Recall that the norm of an element $\alpha$ in $\mathbb{F}_{q^{n+1}}$ is defined as 
$$\Nm(\alpha) = \alpha \cdot \Fr_q(\alpha) \cdot \Fr^2_q(\alpha) \cdots \Fr^n_q(\alpha),$$
where $\Fr_q$ is the Frobenius map
$$\Fr_q: \mathbb{F}_{q^{n+1}} \to \mathbb{F}_{q^{n+1}}, \alpha \mapsto \alpha^{q}.$$

\begin{example} \label{ex:first}
Suppose that we want to find a quadratic non-vanishing form with respect to 
$\mathbb{P}^1(\mathbb{F}_2)$. Since $1+y+y^2$ is irreducible over 
$\mathbb{F}_2[y]$, a basis for the extension field $F_4$ of $F_2$ is $\{1,y\}$. We get 
$$\Nm(x_0 + x_1y) = (x_0 + x_1y) \cdot \Fr_2( x_0 + x_1y) = (x_0 + x_1y) \cdot 
(\Fr_2(x_0) + \Fr_2(x_1) \Fr_2(y))$$  $$=(x_0 + x_1y) \cdot (x_0 + x_1 (1+y)) 
= (x_0 + x_1y) (x_0 + x_1 + x_1y) = x_0^2 + x_0x_1 + x_1^2.$$
\end{example}

The following lemma gives us our first bound on the least degree of a non-vanishing form $f$ with respect to $\mathbb{X}$.

\begin{lemma} \label{lemma:lang}
Let $\mathbb{X} \subseteq \mathbb{P}^n(\mathbb{F}_q)$. Then $\Nz(X) \leq n+1$. 
\end{lemma}
\begin{proof}
This is just a restatement of Theorem 1 in \cite{Lang}. 
\end{proof}

Some words about the notation. 
By saying that a subset $\mathbb{Y}$ of $\mathbb{X}$ is isomorphic to $\mathbb{P}^d(\mathbb{F}_q)$, 
we will mean \emph{linearly} isomorphic. Hence, when
$\mathbb{X}$ contains an isomorphic copy of $\mathbb{P}^d(\mathbb{F}_q)$, we can choose coordinates so that 
$(0:\cdots:0:a_0:\cdots:a_d) \in \mathbb{X}$ for all $(a_0, \ldots, a_d) \in \mathbb{F}_q^{d+1} \setminus (0,\ldots,0)$. 
Likewise, when we write $\mathbb{X}\subseteq \mathbb{P}^n(\mathbb{F}_q) \setminus \mathbb{Y}$, with $\mathbb{Y} \cong \mathbb{P}^d(\mathbb{F}_q)$, we mean that there is a linearly 
isomorphic copy of $\mathbb{P}^d(\mathbb{F}_q)$ which has empty intersection with $\mathbb{X}$. In this situation it is possible to 
choose coordinates such that $(0:\cdots:0:a_0:\cdots:a_d) \notin \mathbb{X}$ for all $(a_0, \ldots, a_d) \in \mathbb{F}_q^{d+1} \setminus (0,\ldots,0).$

\section{Geometric descriptions}

In this section we give bounds on $\Nz(\mathbb{X})$ in terms of the geometric structure of $\mathbb{X}$.

\begin{lemma}  \label{lemma:geometriclower}
Let $\mathbb{Y} \subseteq \mathbb{X} \subseteq \mathbb{P}^n(\mathbb{F}_q)$ and
suppose that $\mathbb{Y} \cong \mathbb{P}^d(\mathbb{F}_q)$. Then 
$\Nz(\mathbb{X}) \geq d+1$. 
\end{lemma}  

\begin{proof}
Choose coordinates such that 
$$(0:\cdots:0:a_0:\cdots:a_{d}) \in \mathbb{X}$$
for all $(a_0,\ldots,a_d) \in \mathbb{F}_q^{d+1} \setminus (0,\ldots,0)$.
Now consider a form $f$ of degree $i<d+1$ with respect to these coordinates. Let 
$$g = f(0,\ldots,0,y_0,\ldots, y_d).$$ If $g=0$, then
$f(p) = 0$ for any point $p$ in $\mathbb{Y}$. Otherwise, $g$ is a form of degree
$i$. By Lemma \ref{lemma:first} $g = 0$ has at least one solution in $\mathbb{Y}$. Thus, in both cases, there is a point $p \in \mathbb{X}$ such that 
$f(p) = 0$. Hence $\Nz(\mathbb{X}) \geq d+1$.
\end{proof}

In the following example we consider a subset $\mathbb{X}$ of $\mathbb{P}^2(\mathbb{F}_3)$ for which it holds that $\Nz(\mathbb{X}) = 2$ although no isomorphic copy of $\mathbb{P}^1(\mathbb{F}_3)$ is contained in $\mathbb{X}$. Thus, Lemma \ref{lemma:geometriclower} is non-sharp.

\begin{example} \label{example:counter}
Let 
$$\mathbb{X} = \{(1:0:0), (0:1:0), (0:0:1), (1:1:0), (0:1:2), (1:0:1)\} \subset \mathbb{P}^2(\mathbb{F}_3).$$
It is an easy exercise to show that $\Nz(\mathbb{X}) = 2$. If there was an isomorphic copy $\mathbb{Y}$ of $\mathbb{P}^1(\mathbb{F}_3)$ contained in $\mathbb{X}$, then there would be a linear change $y_0,y_1,y_2$ of coordinates such that $\mathbb{Y}$ is the zero locus of the linear polynomial $y_0$. Thus, to show that there is no isomorphic copy of $\mathbb{P}^1(\mathbb{F}_3)$ in $\mathbb{X}$, it is enough to show that for an 
arbitrary linear form $f = a_0x_0 + a_1x_1+a_2x_2$, there are at most three points from $\mathbb{X}$ in the zero locus $Z(f)$ of $f$. 
Clearly, if two of the $a_i$'s are zero, then $|Z(f)| = 3$. If only one of the 
$a_i$'s is zero, then $|Z(f)| \leq 3$. For the remaining values of $(a_0:a_1:a_2)$ we have $|Z(x_0 + x_1 + x_2)| = 1, |Z(x_0+x_1+2x_2)| = 1$ and 
$|Z(x_0 + 2x_1 + 2x_2)| = 3$. 
\end{example}

To get a sharp version of Lemma \ref{lemma:geometriclower}, we have to 
put some extra requirements on the set $\mathbb{X}$. We need two lemmas before we can prove the sharp version in Proposition \ref{prop:description}.

\begin{lemma} \label{lemma:projectiononestep}
Let $\mathbb{X} \subset \mathbb{P}^{n}(\mathbb{F}_q)$ and suppose that 
$(1:0\cdots:0) \notin \mathbb{X}$. Let $\pi$ be the projection from 
$\mathbb{X}$ to
$\mathbb{P}^{n-1}(\mathbb{F}_q)$
defined by sending 
$(a_0:\cdots:a_n)$ to $(a_1:\cdots:a_n)$. Then 
$\Nz(\mathbb{X}) \leq \Nz(\pi(\mathbb{X})).$
\end{lemma}
\begin{proof}
Since $(1:0:\cdots:0) \notin \mathbb{X}$, the projection is well defined.
Let $f \in \mathbb{F}_q[x_1, \ldots, x_n]$ be any non-vanishing form on $\pi (\mathbb{X})$. Then the embedding of $f$ into  $\mathbb{F}_q[x_0, \ldots, x_n]$ gives a non-vanishing form with respect to $\mathbb{X}$. Hence $\Nz(\mathbb{X}) \leq \Nz(\pi(\mathbb{X})).$

\end{proof}

\begin{remark} \label{remark}
When $\mathbb{X} \subset \mathbb{P}^{n}(\mathbb{F}_q)$, i.e. there exists a point $p \in  \mathbb{P}^{n}(\mathbb{F}_q) \setminus \mathbb{X}$, it is always possible to choose coordinates such that $p = (1:0:\cdots:0)$, and hence, define a projection from $\mathbb{X}$ to 
$\mathbb{P}^{n-1}(\mathbb{F}_q)$ by omitting the first coordinate.
\end{remark}

\begin{lemma} \label{lemma:series}
 Let 
$\mathbb{X} \subseteq \mathbb{P}^n(\mathbb{F}_q)$ 
and suppose that there is a series of linear projections   
$\mathbb{X} \mapsto \mathbb{X}^{(n-1)} \subseteq \mathbb{P}^{n-1}(\mathbb{F}_q), 
\mathbb{X}^{(n-1)} \mapsto \mathbb{X}^{(n-2)} \subseteq \mathbb{P}^{n-2}(\mathbb{F}_q), \ldots,  
\mathbb{X}^{(i+1)} \mapsto \mathbb{X}^{(i)} \subseteq \mathbb{P}^{i}(\mathbb{F}_q)$, all 
defined by omitting the first coordinate after a suitable linear change of coordinates. Suppose
further that $\mathbb{X}^{(i)} \cong \mathbb{P}^{d}(\mathbb{F}_q)$.
Then $\Nz(\mathbb{X}) \leq d+1$.
\end{lemma}

\begin{proof} From succesive use of Lemma \ref{lemma:projectiononestep}, we obtain
that $\Nz(\mathbb{X}) \leq \Nz(\mathbb{X}^{(n-1)}) \leq \cdots \leq \Nz(\mathbb{X}^{(i)})$. 
We have  $\Nz(\mathbb{P}^{d}(\mathbb{X})) = d+1$, so the lemma follows by repeating the 
embedding argument from Lemma \ref{lemma:projectiononestep}.

\end{proof}

We now combine Lemma \ref{lemma:geometriclower} and Lemma \ref{lemma:series}.

\begin{proposition} \label{prop:description}
Let $\mathbb{X} \subseteq \mathbb{P}^n(\mathbb{F}_q)$ and let $d$ 
be the greatest integer for which there exists $\mathbb{Y} \subseteq \mathbb{X}$, 
with $\mathbb{Y} \cong \mathbb{P}^{d}(\mathbb{F}_q)$. 
Suppose that there is a series of linear projections   
$\mathbb{X} \mapsto \mathbb{X}^{(n-1)} \subseteq \mathbb{P}^{n-1}(\mathbb{F}_q), 
\mathbb{X}^{(n-1)} \mapsto \mathbb{X}^{(n-2)} \subseteq \mathbb{P}^{n-2}(\mathbb{F}_q), \ldots,  
\mathbb{X}^{(d+1)} \mapsto \mathbb{X}^{(d)} \cong \mathbb{Y}$ all 
defined by omitting the first coordinate after a suitable linear change of coordinates. 
Then $\Nz(\mathbb{X}) = d+1$.
\end{proposition}
\begin{proof}
By Lemma \ref{lemma:geometriclower}, $\Nz(\mathbb{X}) \geq d+1$ and by 
Lemma \ref{lemma:series},  $\Nz(\mathbb{X}) \leq d+1$. 

\end{proof}

We end the chapter by giving a "cutting out" description of $\Nz(\mathbb{X})$.

\begin{lemma} \label{lemma:geometricupper}
Let $\mathbb{X} \subseteq \mathbb{P}^n(\mathbb{F}_q) \setminus \mathbb{Y}$, where 
$\mathbb{Y} \cong \mathbb{P}^{d}(\mathbb{X})$. Then 
$\Nz(\mathbb{X})  \leq n-d$. 
\end{lemma}  

\begin{proof}

Choose coordinates such that
$$(0:\cdots:0:a_0:\cdots:a_{d}) \notin \mathbb{X}$$ for all
$(a_0,\ldots,a_d) \in \mathbb{F}_q^{d+1} \setminus (0,\ldots,0)$.

With respect to these coordinates, the map $\mathbb{X} \to \mathbb{P}^{n-d-1}(\mathbb{F}_q)$,
$(a_0:\cdots:a_n) \mapsto (a_0:\cdots:a_{n-d-1})$ is well defined.
By Lemma \ref{lemma:lang}, there is a non-vanishing form in $\mathbb{F}_q[x_0, \ldots, x_{n-d-1}]$ of degree $n-d$. This form is naturally
embedded into $\mathbb{F}_q[x_0, \ldots, x_{n}]$ and is non-vanishing on $\mathbb{X}$.

\end{proof}

%We pose the following question which is motivated by its analogue to Theorem \ref{thm:description}.

%\begin{question}
%Let $\mathbb{X} \subseteq \mathbb{P}^n(\mathbb{F}_q)$ and
%suppose that $d$ is the greatest integer such that there is a $\mathbb{Y} \cong \mathbb{P}^d$ with
%$\mathbb{X} \subseteq \mathbb{P}^n(\mathbb{F}_q) \setminus \mathbb{Y}$. Does it then hold that $\Nz(\mathbb{X})  = n-d$? 
%\end{question}

%We remark that the statement holds for $d=1$. 

\section{Bounds on a non-vanishing form in terms of the number of elements}

\begin{lemma} \label{lemma:upperbound}
Let $\mathbb{X} \subseteq  \mathbb{P}^n( \mathbb{F}_q)$ and let $d$ be the least integer such that
$|\mathbb{X}| \leq q + \cdots + q^d$. Then $\Nz(\mathbb{X}) \leq d$. 
\end{lemma}

\begin{proof}

If $d>n$, then $\Nz(\mathbb{X}) \leq d$ by Lemma \ref{lemma:lang}. Suppose instead that $d \leq n$. We claim that it is possible to construct a series of linear projections 
$\mathbb{X} \mapsto \mathbb{X}^{(n-1)} \subseteq \mathbb{P}^{n-1}(\mathbb{F}_q), 
\mathbb{X}^{(n-1)} \mapsto \mathbb{X}^{(n-2)} \subseteq \mathbb{P}^{n-2}(\mathbb{F}_q), \ldots,  
\mathbb{X}^{(d)} \mapsto \mathbb{X}^{(d-1)} \subseteq \mathbb{P}^{d-1}(\mathbb{F}_q)$, all defined by omitting the first coordinate after a suitable linear change of coordinates. Thus, after proving the claim, the lemma follows from Lemma \ref{lemma:series}. 

Since $|\mathbb{X}| < |\mathbb{P}^n( \mathbb{F}_q)|$ it follows from Remark \ref{remark} that it is possible to define a
projection $\mathbb{X} \to  \mathbb{X}^{(n-1)}$. If  $d > n-1$, (e.g. $d=n$), then
we are done. Else, we have 
$|\mathbb{X}^{(n-1)}| < |\mathbb{P}^{n-1}( \mathbb{F}_q)|$ 
and we can repeat the argument to finally obtain
$\mathbb{X}^{(d-1)}$, which proves the claim. 
\end{proof}

The upper bound is sharp in the sense that for any $n$ and any $d$, there is a set $\mathbb{X}$ where $d$ is the least integer such that $|\mathbb{X}| \leq q + \cdots + q^d$ and $\Nz(\mathbb{X}) = d$. Indeed, let $\mathbb{X}$ be the image of any linear embedding of $\mathbb{P}^{d-1}(\mathbb{F}_q)$ into $\mathbb{P}^n(\mathbb{F}_q)$. Then we can apply Proposition \ref{prop:description} with $\mathbb{Y} =  \mathbb{P}^{d-1}(\mathbb{F}_q)$, so
it follows  that 
$\Nz(\mathbb{X}) = d$. Finally, $d$ is easily verified to be the least integer such that $|\mathbb{X}| \leq q + \cdots + q^d$.

\begin{lemma} \label{lemma:lowerbound}
Let $\mathbb{X} \subseteq \mathbb{P}^n(\mathbb{F}_q)$.
If $q^{n-d+2} + \cdots + q^n < |\mathbb{X}|$, then $\Nz(\mathbb{X}) \geq d$.
\end{lemma}

\begin{proof}
We can suppose that $d \geq 2$. Let $f$ be a form of degree $d-1$. By Lemma \ref{lemma:first}, the number of projective solutions is at least
$1 + q + \cdots + q^{n-d+1}$. It follows that there are at most
$|\mathbb{P}^n( \mathbb{F}_q)| - (1 + q + \cdots + q^{n-d+1}) = q^{n-d+2} + \cdots + q^n$ points where $f$ is non-vanishing. 
By assumption, $q^{n-d+2}  + \cdots + q^n< |\mathbb{X}|$. Thus, there is a point $p \in \mathbb{X}$ such that $f(p)=0$. Hence $\Nz(\mathbb{X}) > d-1$. 

\end{proof}

The bound in Lemma \ref{lemma:lowerbound} is also sharp in the same sense as described above. To see this, consider the set $\mathbb{X} = \mathbb{P}^n(\mathbb{F}_q) \setminus \mathbb{Y}$,
where $\mathbb{Y}$ is any isomorphic copy of $\mathbb{P}^{n-d}(\mathbb{F}_q)$.
By Lemma \ref{lemma:geometricupper}, we have $\Nz(\mathbb{X}) \leq d$. But 
$|\mathbb{X}| = q^{n-d+1} + \cdots + q^n$, so 
$\Nz(\mathbb{X}) \geq d$ by Lemma \ref{lemma:lowerbound}. Hence $\Nz(\mathbb{X}) = d$.

We can state the following theorem.

\begin{theorem} \label{thm:bounds}
Let $\mathbb{X} \subseteq \mathbb{P}^{n}(\mathbb{F}_q)$. If $|\mathbb{X}| \leq q^n$, let $d_1 = 1$. Otherwise, let $d_1$ be the greatest integer such that $q^{n-d_1+2} + \cdots + q^n < |\mathbb{X}|$. Let $d_2$ be the least 
integer such that $|\mathbb{X}| \leq q + \cdots + q^{d_2}$. Then 

$$ d_1 \leq \Nz(\mathbb{X}) \leq d_2.$$

The bounds are sharp in the sense that for any $n$ and any $d$, there is a set $\mathbb{X}_1$ such that $\Nz(\mathbb{X}_1)$  assumes the lower bound, and a set $\mathbb{X}_2$ such that $\Nz(\mathbb{X}_2)$ assumes the upper bound.

\end{theorem}
\begin{proof}
The first part of the theorem follows from Lemma \ref{lemma:upperbound} and Lemma \ref{lemma:lowerbound}. The second part follows from the remarks  after 
Lemma \ref{lemma:upperbound} and Lemma \ref{lemma:lowerbound}.
\end{proof}

When $|\mathbb{X}| \leq q$, then $d_1=d_2=1$ and we obtain Kreuzer's Proposition 3.2 \cite{Kreuzer} as a special case of Theorem \ref{thm:bounds}.

\section{An algorithm to compute a non-vanishing form}

To find $\Nz(\mathbb{X})$ is by no means an easy computational task, so
in order to get a fast method, we should not require that the degree of the returned form is minimal. The algorithm that we present below use the ideas of Lemma \ref{lemma:upperbound} and the degree of the returned form is bounded by $d$, where $d$  is the least 
integer such that $|\mathbb{X}| \leq q + \cdots + q^d$.
\begin{algorithm} \label{alg1}
\hspace{1cm}

\begin{enumerate}

\item If $|\mathbb{X}| = |\mathbb{P}^n(\mathbb{F}_q)|$, return the norm form. 

\item Else, there is a point $p \in \mathbb{P}^n(\mathbb{F}_q) \setminus \mathbb{X}$. Change coordinates so that 
$p = (0:\cdots:0:1)$. Project $\mathbb{X}$ to $\mathbb{X}' \subseteq  
\mathbb{P}^{n-1}(\mathbb{F}_q)$ by omitting the last coordinate. Let $f$ be a form returned after performing step 1 with $n=n-1$ and $\mathbb{X}=\mathbb{X}'$. Return $f$ with respect to the original coordinates. 

\end{enumerate}
\end{algorithm}

Note that to actually construct a non-vanishing form, we have to compute a norm form. Thus, our method relies on finding an irreducible over
 $\mathbb{F}_q[y]$. We refer the reader to \cite{Adleman}, where algorithms to construct irreducibles are discussed.

\begin{example} \label{example:alg}
Consider $\mathbb{P}^3(\mathbb{F}_2)$ and the point set $\mathbb{X} = \{(1:1:1:0),(0:0:0:1), (1:0:0:0), (0:1:0:0), (0:0:1:0), (1:1:1:1)\}$ with respect to some coordinates $x_0, x_1, x_2, x_3$. We have $2^2+2=6$ and, hence, $1 \leq \Nz(\mathbb{X}) \leq 2$ by Theorem \ref{thm:bounds}. 

We will now perform Algorithm \ref{alg1} on $\mathbb{X}$. Pick the point $(0:0:1:1) \notin \mathbb{X}$. With respect to the linear change of coordinates
$y_0 = x_0$, $y_1 = x_1$, $y_2 = x_2+x_3$ and $y_3 =  x_3$, this point reads
$(0:0:0:1)$. Thus, with respect to the coordinates $y_0, y_1, y_2, y_3$, we get 
$\mathbb{X} = 
 \{(1:1:1:0), (0:0:1:1), (1:0:0:0), (0:1:0:0), (0:0:1:0), (1:1:0:1)\}$ and  
 $(0:0:0:1) \notin \mathbb{X}$.  

We project down to $ \mathbb{P}^2( \mathbb{F}_2)$ and get the points $\pi(\mathbb{X}) = \{(1:1:1),\allowbreak (0:0:1), (1:0:0), (0:1:0), (1:1:0)\}$. Notice that 
$\pi(0:0:1:1) = \pi(0:0:1:0)$. Now we are looking for a non-vanishing form with respect to these five points in $\mathbb{P}^2( \mathbb{F}_2)$. We notice that the point $p' =\allowbreak (1:0:1)$ is missing from $\pi(\mathbb{X})$, so we consider the linear change of coordinates $z_0 = y_0+y_2,z_1=y_1,z_2=y_2$ for which $p' = (0:0:1)$. Then we get 
$\pi(\mathbb{X}) = \{(0:1:1), (1:0:1), (1:0:0), (0:1:0), (1:1:0)\}$ with respect to these coordinates. We project down to $ \mathbb{P}^1( \mathbb{F}_2)$ to get the points
$\{(0:1), (1:0), (1:1)\}$. From Example \ref{ex:first} we know that $z_0^2 + z_1^2 + z_0z_1$ is non-vanishing. Hence $(y_0+y_2)^2  + (y_0+y_2)y_1 + y_1^2$ is non-vanishing on $\pi(\mathbb{X})$, and we get the following quadratic form which is non-vanishing on 
$\mathbb{X}$: 
$$(x_0+x_2+x_3)^2 + (x_0 + x_2+x_3)x_1 + x_1^2.$$

\end{example}
The non-vanishing form constructed by the algorithm in Example \ref{example:alg} is two, and is in fact equal to $\Nz(\mathbb{X})$. We can verify this by showing that 
there is an embedding of $\mathbb{P}^1(\mathbb{F}_2)$ in $\mathbb{X}$, since it
then follows by Lemma  \ref{lemma:geometriclower} that $\Nz(\mathbb{X}) \geq 2$. 
Indeed, $\{(1:1:1:0), (0:0:0:1), (1:1:1:1)\} \cong \mathbb{P}^1(\mathbb{F}_2)$, which  can be seen by changing coordinates to $x_0+x_2, x_1+x_2,x_2,x_3$.

%\center
%\institute{
\vspace{0.5cm}
\footnotesize{
\hspace{0.5cm} Department of Mathematics, Stockholm University,

\hspace{0.5cm} SE-106 91 Stockholm, Sweden

\hspace{0.5cm} email: samuel@math.su.se}
\end{document}